\newtheorem{thm}{Theorem}[section]
\newtheorem{lem}[thm]{Lemma}
\newtheorem{cor}[thm]{Corollary}
\def\R{{\mathbb R}}
\def\Z{{\mathbb Z}}
\def\T{{\mathbb T}}
\def\bb{\begin}
\def\be{\begin{equation}}
	\def\ee{\end{equation}}
\def\bea{\begin{eqnarray}}
	\def\eea{\end{eqnarray}}
\def\beaa{\begin{eqnarray*}}
	\def\eeaa{\end{eqnarray*}}
\def\ifl{\iffalse}
\def\bb{\begin}
           \def\ea{\end{array}}
          \def\ec{\end{center}}
     \def\ed{\end{description}}
\def\be{\bb{equation}}        \def\ee{\end{equation}}
\def\bea{\bb{eqnarray}}       \def\eea{\end{eqnarray}}
\def\beaa{\bb{eqnarray*}}     \def\eeaa{\end{eqnarray*}}
 \def\et{\end{thebibliography}}
       \def\nt{\noindent}
\begin{document}

\title{Notes on any given number of non-hyperbolic physical measures of some partially hyperbolic diffeomorphism.}

\date{}
\maketitle

{\center
Hangyue Zhang

\smallskip

Department of Mathematics

Nanjing University

Nanjing 210093, China

\smallskip

\smallskip

\footnote{
2020 Mathematics Subject Classification. 37D30, 37C40, 37D35,37D25.

Key words and phrases. physical measures,  partially hyperbolic diffeomorphisms,   0-Lyapunov exponent center.

}

\smallskip

}

\begin{abstract}
In this paper, we provide an example of a partially hyperbolic diffeomorphism with any finite number of physical measures when some Lyapunov exponent is 0 on the center.
\end{abstract}

\smallskip
\smallskip
\smallskip
\smallskip
\smallskip
\smallskip

Let $M$ be a smooth connect compact Riemannian manifold and $f$ be a diffeomorphism on $M$. An invariant measure $\mu$ of $f$ is called {\it physical measure} if the {\it basin} of $\mu$ denoted by 
\begin{center}
	$B(\mu)=\{x\in M: \mu=\lim_{n\to+\infty} \frac{\sum_{0\le j\le n-1}\delta_{f^j(x)}}{n}$ in $weak^*$ topology$\}$
\end{center}
has positive Lebesgue measure. The physical measure was first discovered by Sinai, Ruelle and Bowen when they studied Anosov systems and Axiom $A$ attractors in 1970's\cite{Bow,th,am,gi} and can be used to observe the asymptotic behavior of orbits in the sense of Lebesgue measure.  With the development of the discipline of dynamical systems, there have been many achievements in the study of physical measures with non-uniform hyperbolicity such as \cite{BV,DVY,ja1,ja3,ja2,cv,and,CM}. However, these achievements are all research on physical measures in the sense of {\it hyperbolic measure} denoted by an invariant measure whose Lyapunov exponents are all non-zero.  Then, naturally, some people will ask, can non-hyperbolic measures of some partially hyperbolic diffeomorphism be physical measures? If so, how much can there be? Do they possess basin covering property? Then, this article provides a positive answer to those questions and provide any finite number of physical measures on some partially hyperbolic diffeomorphism with basin covering property.

A diffeomorphsim $f$ is {\it partially hyperbolic}  if tangent bundle $TM=E^u\oplus F$ satisfying that there exists $C>0,\lambda\in(0,1)$ and $Df(E^u)=E^u, Df(F)=F$ such that for any unit vectors $v^u\in E^u, v\in F$,
\[\frac{\|Df^n(v)\|}{\|Df^n(v^u)\|}\le C\lambda^n, \|Df^{-n}(v^u)\|\le  C\lambda^n.\]

Partially hyperbolic diffeomorphisms play an important role in non-uniform hyperbolicity. It extends the definition of uniform hyperbolicity and is suitable for most diffeomorphsim.

Let $A:\T^2\to\T^2$ be a hyperbolic linear automorphism 
\( \begin{pmatrix}
	2  &  1\\
	
	1 &  1
\end{pmatrix} \), $\alpha$ be a irrational number and $\tau_\alpha:S^1\to S^1$ be an  irrational rotation on circle $S^1$ whose evolutionary form is \( \tau_\alpha(x)=x+\alpha \) mod 1.

\begin{lem}\label{zhuyao}
	Denote $f=A\times\tau_\alpha:\T^2\times S^1\to\T^2\times S^1$. Then $f$ admit the unique physical measure which consists with the Lebesgue measure  of $\T^2\times S^1$ denoted by $m_{\T^2\times S^1}$.
\end{lem}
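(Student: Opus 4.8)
The plan is to exploit the product structure of $f=A\times\tau_\alpha$ and reduce everything to one fact: that the Lebesgue measure $m_{\T^2\times S^1}=m_{\T^2}\otimes m_{S^1}$ is $f$-invariant and \emph{ergodic}. Invariance is immediate, since $A$ preserves the Haar measure $m_{\T^2}$ and $\tau_\alpha$ preserves $m_{S^1}$. Granting ergodicity, the conclusion that $m_{\T^2\times S^1}$ is the unique physical measure with basin of full Lebesgue measure will follow from the Birkhoff ergodic theorem by a soft separability argument. So the real content is ergodicity of the product, which I would prove by Fourier analysis.

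Suppose $h\in L^2(m_{\T^2\times S^1})$ satisfies $h\circ f=h$, and expand $h(p,x)=\sum_{k\in\Z^2,\,\ell\in\Z}c_{k,\ell}\,e^{2\pi i(\langle k,p\rangle+\ell x)}$. Using $\langle k,Ap\rangle=\langle A^{T}k,p\rangle$ and $\tau_\alpha(x)=x+\alpha$, and reindexing by the automorphism $A^{T}$ of $\Z^2$, the invariance equation becomes $c_{k,\ell}=e^{2\pi i\ell\alpha}\,c_{(A^{T})^{-1}k,\,\ell}$ for every $(k,\ell)$. Taking moduli and iterating yields $|c_{k,\ell}|=|c_{(A^{T})^{-n}k,\,\ell}|$ for all $n\ge 0$. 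Since $A$ is hyperbolic, $A^{T}$ has no nonzero periodic points in $\Z^2$, so for $k\neq 0$ the orbit $\{(A^{T})^{-n}k\}_{n\ge0}$ is infinite; square-summability of the coefficients then forces $c_{k,\ell}=0$. Hence $h$ depends on $x$ alone, and the surviving relation $c_{0,\ell}=e^{2\pi i\ell\alpha}c_{0,\ell}$ together with the irrationality of $\alpha$ (so $e^{2\pi i\ell\alpha}\neq1$ for $\ell\neq0$) forces $c_{0,\ell}=0$ for $\ell\neq0$. Thus $h$ is a.e.\ constant, and $f$ is ergodic. (Alternatively, ergodicity is the standard fact that a weakly mixing system times an ergodic one is ergodic: $A$ is mixing, hence weakly mixing, for $m_{\T^2}$, and $\tau_\alpha$ is ergodic for $m_{S^1}$.)

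With ergodicity in hand I would finish as follows. Choose a countable dense set $\{g_m\}\subset C(\T^2\times S^1)$. For each $m$, Birkhoff's theorem gives a full-measure set on which $\frac{1}{n}\sum_{j=0}^{n-1}g_m(f^j(p,x))\to\int g_m\,dm_{\T^2\times S^1}$; intersecting over $m$ produces a single set $G$ of full Lebesgue measure. A standard uniform-approximation (three-$\varepsilon$) argument then shows that for every $(p,x)\in G$ and every $g\in C(\T^2\times S^1)$ the ergodic averages converge to $\int g\,dm_{\T^2\times S^1}$, i.e.\ $G\subset B(m_{\T^2\times S^1})$. As $m_{\T^2\times S^1}$ is itself the Lebesgue measure, $B(m_{\T^2\times S^1})$ has full Lebesgue measure, so $m_{\T^2\times S^1}$ is a physical measure. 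Uniqueness is then automatic, since distinct physical measures have disjoint basins and $B(m_{\T^2\times S^1})$ already exhausts Lebesgue measure.

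I expect the ergodicity step to be the only real obstacle; the rest is routine. The two delicate points there are (i) that hyperbolicity of $A$ makes every nonzero $A^{T}$-orbit in $\Z^2$ infinite — this is exactly where the hyperbolic factor (rather than mere invariance) is used, while the neutral center contributes the $\ell$-direction, tamed by irrationality of $\alpha$ — and (ii) upgrading the function-dependent a.e.\ convergence of Birkhoff's theorem to a \emph{single} full-measure basin valid simultaneously for all continuous test functions, which is handled by separability of $C(\T^2\times S^1)$.
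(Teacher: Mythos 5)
Your proposal is correct and follows essentially the same route as the paper: Fourier expansion of an $L^2$ invariant function, killing the coefficients with nonzero $\T^2$-frequency via the infinitude of nonzero $A^T$-orbits in $\Z^2$ (the paper uses decay of Fourier coefficients where you use square-summability, an immaterial difference) and the remaining ones via irrationality of $\alpha$, then passing from ergodicity of Lebesgue to a full-measure basin. You are in fact slightly more explicit than the paper on two points it leaves implicit, namely why the orbit $\{(A^T)^{-n}k\}$ is infinite for $k\neq 0$ and the separability argument giving a single basin for all continuous test functions.
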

\begin{proof}
	Notice that $f$ is induced the map $\tilde{f}(x,y,z)=(2x+y,x+y,z+\alpha)$ on $\R^3$. Consider the measurable function $\phi\in L^2(m_{\T^2\times S^1})$ and $\phi\circ f=\phi$ a.e-$m_{\T^2\times S^1}$. Then we can assume that $\phi$ is induced by $\tilde{\phi}$ which also can be called the lift of $\phi$. Thus, $\tilde{\phi}\circ\tilde{f}=\tilde{\phi}$. By Fourier series of $\tilde{\phi}$,
	\[\tilde{\phi}(x,y,z)=\sum_{m,n,k}a_{m,n,k}\kappa_{m,n,k}(x,y,z)\]
	where $\kappa_{m,n,k}(x,y,z)=e^{2\pi i(mx+ny+kz)}$ and $a_{m,n,k}$ is constant. It follows that
	\begin{align*}
		\begin{split}
			\kappa_{m,n,k}\circ\tilde{f}(x,y,z)&=e^{2\pi i((2x+y)m+(x+y)n+(z+\alpha)k)} \\
			& =e^{2\pi i\alpha k}\cdot e^{2\pi i((2m+n)x+(m+n)y+kz)} \\
			& =e^{2\pi i\alpha k}\kappa_{2m+n,m+n,k}.
		\end{split}
	\end{align*}
	By the uniqueness of Fourier expansion coefficients and $\tilde{\phi}\circ\tilde{f}=\tilde{\phi}$, 
	\[e^{2\pi i\alpha k}a_{m,n,k}=a_{2m+n,m+n,k}.\]
	Claim that $a_{m,n,k}=0$ for any $(m,n,k)\neq(0,0,0)$. Then $\tilde{\phi}=a_{0,0,0}$ a.e-Lebesgue. It follows that $m_{\T^2\times S^1}$ is an ergodic measure and  there is a $m_{\T^2\times S^1}$-full measure set contained in  basin $B(m_{\T^2\times S^1})$.  Next, We only need to prove the claim.
	
	Notice that $(m,n,k)\neq(0,0,0)$ can be divided into two situations:
	\begin{itemize}
		\item $m$ and $n$ are both zero, $k$ is not zero;
		\item $m$ and $n$ are not all zero.
	\end{itemize}
	For the first item, \( e^{2\pi i\alpha k}a_{0,0,k}=a_{0,0,k} \). Since $\alpha$ is a irrational number, \( a_{0,0,k}=0 \) for all $k\neq 0$.  Arguing by contradiction for the second item, assume that there exists some $a_{m,n,k}\neq0$ for some $m$ or $n$ not zero. This implies that there exist infinite coefficients whose modulus coincide with $|a_{m,n,k}|$. This would contradicts the fact that $|a_{m,n,k}|\to 0$ when $|m|+|n|+|k|\to\infty$. Then we complete the proof of the claim.
\end{proof}

Recall that irrational numbers $\alpha_1,\alpha_2,...,\alpha_\ell$ are {\it rationally independent} if any inters $C_1,C_2,...,C_\ell$ satisfying that
$$
C_1\alpha_1+C_2\alpha_2+...+C_\ell\alpha_\ell=0 \ {\rm mod 1}
$$
can only all be $0$. Then the constructed $f$ can also be $A\times\tau_{\alpha_1}\times\tau_{\alpha_2}\times\cdots\times\tau_{\alpha_\ell}$ on $\T^2\times S^1_1\times\cdots\times S^1_\ell$ as long as $\alpha_1,\alpha_2,...,\alpha_\ell$ are  rationally independent where $S^1_i=S^1$. We can obtain the corresponding same result by the above similar reason.

Recall that a homeomorphism $H$ is called {\it topologically transitive} if for any open sets $U,V$ of $M$, there exists some $N\in\mathbb{N}^+$ such that $H^N(U)\cap V\neq\emptyset$.
Then, due to the ergodicity of the Lebesgue measure(Lemma~\ref{zhuyao}), we can immediately obtain the following corollary.
\begin{cor}
	f is topologically transitive.
\end{cor}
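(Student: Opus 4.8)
The plan is to derive topological transitivity from the ergodicity of the Lebesgue measure $m:=m_{\T^2\times S^1}$ established in Lemma~\ref{zhuyao}, using the single extra observation that $m$ has full support. Recall that two ingredients upgrade ergodicity of an invariant probability measure to topological transitivity of the underlying map: (i) the measure assigns positive mass to every nonempty open set, and (ii) for any two positive-measure sets, some forward iterate of one meets the other. Here (i) is immediate, since $m$ is the Lebesgue measure on the product manifold $\T^2\times S^1$, so $m(U)>0$ for every nonempty open $U$.

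For (ii) I would invoke the standard consequence of ergodicity: if $A,B$ are measurable with $m(A),m(B)>0$, then there is a positive integer $N$ with $m(f^{-N}(A)\cap B)>0$. To see this, set $C=\bigcup_{n\ge 0}f^{-n}(A)$; then $f^{-1}(C)\subseteq C$, and since $f$ preserves $m$ we have $m(f^{-1}(C))=m(C)$, forcing $f^{-1}(C)=C$ up to an $m$-null set. Ergodicity then gives $m(C)\in\{0,1\}$, and as $A\subseteq C$ has positive measure we get $m(C)=1$. Hence $m(C\cap B)=m(B)>0$, so at least one of the terms $f^{-n}(A)\cap B$ has positive measure, which is the assertion.

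Combining (i) and (ii) finishes the proof. Given nonempty open sets $U,V$, apply the previous paragraph with $A=V$ and $B=U$: there is a positive integer $N$ with $m(f^{-N}(V)\cap U)>0$, so in particular $f^{-N}(V)\cap U\ne\emptyset$. Choosing $x$ in this intersection gives $x\in U$ and $f^N(x)\in V$, whence $f^N(x)\in f^N(U)\cap V$ and therefore $f^N(U)\cap V\ne\emptyset$. This is exactly the defining property of topological transitivity.

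I do not anticipate any real obstacle here; the content is entirely contained in Lemma~\ref{zhuyao}, and the argument is the classical one. An alternative, equally short route would bypass (ii) and instead apply Birkhoff's ergodic theorem to conclude that $m$-almost every $x$ has equidistributed (hence, by full support, dense) forward orbit; the existence of a single dense forward orbit, together with the fact that deleting the first finitely many iterates preserves equidistribution and hence density of the tail, yields transitivity directly. The one point worth stating explicitly is the use of full support, since ergodicity alone does not imply transitivity without it.
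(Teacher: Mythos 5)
Your proof is correct and follows exactly the route the paper intends: the paper offers no written proof beyond the remark that the corollary is ``due to the ergodicity of the Lebesgue measure,'' and you supply the standard argument deriving transitivity from ergodicity of a fully supported invariant measure. Your explicit observation that full support of $m_{\T^2\times S^1}$ is needed (ergodicity alone would not suffice) is a point the paper leaves implicit, and your argument is sound, including the passage from $m(C)=1$ to a positive iterate via $f^{-1}(C)=\bigcup_{n\ge 1}f^{-n}(A)$.
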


Next, we construct the partially hyperbolic diffeomorphsim $g$ on $\T^2\times S^1\times S^1$ and prove following theorem. Notice that there always exists diffeomorphisms such that it doesn't admit physical measures. We can check that by considering  direct product of any diffeomorphism with identity on  Riemannian manifold. So we do not have any theoretical results regarding the 0-Lyapunov exponent center.

\smallskip
\smallskip
\smallskip

\begin{thm}\label{gouzao}
	For any $\ell\in\Z^+$, there exists some partially hyperbolic diffeomorphism $g$ on $\T^2\times S^1\times S^1$ such that $g$ admits $\ell$ physical measures such that the union of basins of $\ell$-physical measures has full-Lebesgue measure and each physical measure is non-hyperbolic.
\end{thm}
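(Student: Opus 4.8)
The plan is to take $g$ to be the product $g=A\times\tau_\alpha\times h$ on $\T^2\times S^1\times S^1$, where $h\colon S^1\to S^1$ is a Morse--Smale (gradient-like) diffeomorphism of the circle having exactly $\ell$ attracting fixed points $p_1,\dots,p_\ell$ and $\ell$ repelling fixed points, and where $h$ is chosen so that $\max_{w\in S^1}|h'(w)|<\sigma$, with $\sigma=\tfrac{3+\sqrt5}{2}$ the expanding eigenvalue of $A$. Such an $h$ clearly exists: take a small Morse--Smale perturbation of the identity with $2\ell$ alternating fixed points and all derivatives close to $1$. First I would check that $g$ is partially hyperbolic with $E^u$ the (constant) unstable line field of $A$ and $F$ the sum of the stable line field of $A$, the tangent to the $\tau_\alpha$-circle, and the tangent to the $h$-circle. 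Both bundles are $Dg$-invariant. For a unit $v^u\in E^u$ one has $\|Dg^n v^u\|=\sigma^n$ and $\|Dg^{-n}v^u\|=\sigma^{-n}$, while for a unit $v\in F$ the worst case is the $h$-direction, where $\|Dg^n v\|\le(\max|h'|)^n$; hence the ratio is bounded by $(\max|h'|/\sigma)^n$ and one may take $\lambda=\max|h'|/\sigma<1$, which also dominates $\sigma^{-n}=\|Dg^{-n}v^u\|$.

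Next I would identify the candidate physical measures. For each sink $p_i$ set $\mu_i=m_{\T^2\times S^1}\times\delta_{p_i}$, which is $g$-invariant because $m_{\T^2\times S^1}$ is $f$-invariant (Lemma~\ref{zhuyao}) and $h(p_i)=p_i$. Its Lyapunov exponents are $\log\sigma$ and $-\log\sigma$ (from $A$), $0$ (from the rotation $\tau_\alpha$), and $\log|h'(p_i)|<0$ (from the sink), so $\mu_i$ is non-hyperbolic owing to the zero center exponent. To see that $\mu_i$ is physical, let $W^s(p_i)\subset S^1$ be the open arc attracted to $p_i$ and take $(x,z)\in B(m_{\T^2\times S^1})$ and $w\in W^s(p_i)$. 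For a continuous test function $\psi$ on $\T^2\times S^1\times S^1$, uniform continuity of $\psi$ together with $h^j(w)\to p_i$ gives $\psi(f^j(x,z),h^j(w))-\psi(f^j(x,z),p_i)\to 0$, so the Birkhoff average of $\psi$ along the $g$-orbit of $(x,z,w)$ has the same limit as that of $(x,z)\mapsto\psi(x,z,p_i)$ along the $f$-orbit of $(x,z)$. By the ergodicity in Lemma~\ref{zhuyao} this limit equals $\int\psi(\cdot,p_i)\,dm_{\T^2\times S^1}=\int\psi\,d\mu_i$. Hence $B(m_{\T^2\times S^1})\times W^s(p_i)\subseteq B(\mu_i)$, which has positive Lebesgue measure.

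Finally I would show these are exactly the physical measures and that their basins cover. Since the arcs $W^s(p_1),\dots,W^s(p_\ell)$ exhaust $S^1$ except for the finitely many repelling fixed points, their union has full Lebesgue measure in the $h$-circle, and $B(m_{\T^2\times S^1})$ has full measure in $\T^2\times S^1$ (Lemma~\ref{zhuyao}); therefore $\bigcup_i B(\mu_i)\supseteq B(m_{\T^2\times S^1})\times\bigsqcup_i W^s(p_i)$ has full Lebesgue measure, which is the basin-covering property. If $\nu$ were any physical measure, then $B(\nu)$ has positive measure and so meets the full-measure set $\bigcup_i B(\mu_i)$; but distinct limit measures have disjoint basins, so $\nu=\mu_i$ for some $i$. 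Thus $g$ has exactly the $\ell$ non-hyperbolic physical measures $\mu_1,\dots,\mu_\ell$. I expect the main obstacle to be twofold: pinning down the derivative bound $\max|h'|<\sigma$ so that partial hyperbolicity survives the insertion of the extra circle factor, and making the joint Birkhoff-average argument rigorous, i.e.\ transferring the ergodicity of $m_{\T^2\times S^1}$ for $f$ into convergence of time averages for $g$ on the product space strongly enough to conclude that no further physical measures appear (in particular, that the repelling points contribute only a null set and no physical measure).
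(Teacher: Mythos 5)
Your proposal follows essentially the same route as the paper: the same product construction $g=f\times h$ with a Morse--Smale circle factor having $\ell$ sinks, the same candidate measures $m_{\T^2\times S^1}\times\delta_{p_i}$, and the same uniform-continuity argument transferring Birkhoff averages from the sink fiber $\T^2\times S^1\times\{p_i\}$ to the product of $B(m_{\T^2\times S^1})$ with the attracting arc. Your explicit derivative bound $\max|h'|<\sigma$ is a worthwhile extra precaution that the paper omits when it simply asserts partial hyperbolicity of $g$, and your closing observation that no further physical measures can exist goes slightly beyond what the paper records, but the core argument is identical.
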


\smallskip
\smallskip
\smallskip

This results also hold when partially hyperbolic diffeomorphism admits multiple centers as long as we adjust the construction of $g$.

\nt Construction of $g$:

For any $\ell$, let $h: S^1 \to S^1$ be a $C^1$ Morse-Smale diffeomorphism and $\{u_1,\dots,u_{2\ell}\}$ be the $2\ell$-ordered points on the circle satisfying:
\begin{itemize}
	\item the non-wandering set of $h$ is $\{u_1,\dots,u_{2\ell}\}$;
	\item denote $u_{2\ell+1}=u_1$,  $u_{2i}$ is a sink and $u_{2i+1}$ is a source  for each $i\in\{1,2,...,\ell\}$.
\end{itemize} 

Let $g=f\times h$. Then, we can prove our theorem.

\begin{proof}[Proof of Theorem~\ref{gouzao}]
By Fubini's theorem, it is easy to check that $B(m_{\T^2\times S^1})\times (S^1\backslash\{u_{2i+1}:i=1,2,...,\ell\})$ has Lebesgue-full measure. Next, we will prove that $B(m_{\T^2\times S^1\times\{u_{2i}\}})=B(m_{\T^2\times S^1})\times (u_{2i-1,2i+1})$. If this result is correct, then each $m_{\T^2\times S^1\times\{u_{2i}\}}$ is a physical measure and the union of  basins of all $m_{\T^2\times S^1\times\{u_{2i}\}}$ is full-Lebesgue measure. It is clear that $g$ is partially hyperblic and any invariant measure of $g$ is non-hyperbolic. Thus we just need to prove the result we just mentioned.

It is obvious that $B(m_{\T^2\times S^1\times\{u_{2i}\}})\subset B(m_{\T^2\times S^1})\times (u_{2i-1,2i+1})$ since each $u_{2i}$ is a sink of $h$.

 Because each $u_{2i}$ is a sink of $h$ again, $\lim_{n\to+\infty}d(g^n(x,y,z),g^n(x,y,u_{2i}))=0$ for any $(x,y,z)\in B(m_{\T^2\times S^1})\times (u_{2i-1,2i+1})$.  Similar to Lemma~\ref{zhuyao}, any point in $B(m_{\T^2\times S^1\times\{u_{2i}\}})\cap \T^2\times S^1\times\{u_{2i}\}$ can be written as  $(x,y,u_{2i})$ in $B(m_{\T^2\times S^1})\times \{u_{2i}\}$.  For any continuous function $\psi$ on $\T^2\times S^1\times S^1$, due to the uniform continuity and boundedness of continuous functions in compact spaces, then for any $\varepsilon>0$, there exist $\delta(\varepsilon)$ and $P$ such that any $a,b$ satisfying that $d(a,b)<0$, $|\psi(a)-\psi(b)|<\varepsilon$ and $\sup|\psi|\le  P$ hold here.
  In particular, for such $\delta(\varepsilon)$, there exists $N_\delta>0$ such that for any $n\ge N_\delta$, $d(g^n(x,y,z),g^n(x,y,u_{2i}))<\delta(\varepsilon)$. This implies that for $n\ge N_\delta$, $|\psi(g^n(x,y,z))-\psi(g^n(x,y,u_{2i}))|<\varepsilon$. Then

\begin{align*}
	\begin{split}
		\frac{\sum_{0\le i\le n-1}\psi(g^n(x,y,z))}{n} & \le\frac{\sum_{0\le i\le n-1}\psi(g^n(x,y,u_{2i}))}{n}+\frac{N_\delta P+(n-N_\delta)\varepsilon}{n} \\
		& \le\frac{\sum_{0\le i\le n-1}\psi(g^n(x,y,u_{2i}))}{n}+\frac{N_\delta P}{n}+\varepsilon
	\end{split}
\end{align*}
Notice that $\frac{N_\delta P}{n}$ can be arbitrarily small when $n$ is sufficiently large. It follows that 
\[m_{\T^2\times S^1\times\{u_{2i}\}}=\lim_{n\rightarrow+\infty}\frac{\sum_{0\le i\le n-1}\delta_{g^i(x,y,u_{2i})}}{n}=\lim_{n\rightarrow+\infty}\frac{\sum_{0\le i\le n-1}\delta_{g^i(x,y,z)}}{n}.\]
In other words, $B(m_{\T^2\times S^1})\times (u_{2i-1,2i+1})\subset B(m_{\T^2\times S^1\times\{u_{2i}\}})$.
\end{proof}

\smallskip
\smallskip
\smallskip

\smallskip

E-mail address: zhanghangyue@nju.edu.cn

\end{document}